\numberwithin{equation}{section}
\theoremstyle{plain}
\newtheorem{thm}[subsection]{Theorem}
\newtheorem{lemma}[subsection]{Lemma}
\newtheorem{cor}[subsection]{Corollary}
\newtheorem{prop}[subsection]{Proposition}
\theoremstyle{definition}
\numberwithin{equation}{section}
\author{Subash Chandra Behera}
\address{
School of Mathematics \& Computer Science\\
Indian Institute of Technology Goa\\
At Goa College of Engineering Campus\\
Farmagudi, Ponda-403401 \\
Goa, India}
\email{subash20232102@iitgoa.ac.in}
\author{Shiv Parsad}
\address{
School of Mathematics \& Computer Science\\
Indian Institute of Technology Goa\\
At Goa College of Engineering Campus\\
Farmagudi, Ponda-403401 \\
Goa, India} 
\email{shiv@iitgoa.ac.in}
\begin{document}

\title{Some analogues of isoperimetric inequality}

\subjclass{Primary 52B60; Secondary 51M09}

\keywords{Area, Perimeter; Cyclic polygon; Tangential Polygon; Gauss- Bonnet}

\begin{abstract}  
The discrete isoperimetric inequality states that among all \( n \)-gons with a fixed area, the regular \( n \)-gon has the least perimeter. We prove analogues of the discrete isoperimetric inequality (involving circumradius or inradius) for cyclic and tangential polygons in hyperbolic geometry, considering both single and multiple polygons. Furthermore, we establish two versions of the isoperimetric inequality for multiple polygons in hyperbolic geometry with some restriction on their area or perimeter.

\end{abstract}  

\maketitle

%%%%%%%%%%%%%% Section 1 (Introduction) %%%%%%%%%%%%

\section{Introduction}

The discrete isoperimetric inequality states that among all \( n \)-gons with a fixed area, the regular \( n \)-gon has the least perimeter. This result holds not only in Euclidean geometry but also in spherical and hyperbolic geometries, with the spherical case established by László Fejes Tóth \cite{LF} and the hyperbolic case proven by Károly Bezdek \cite{BK}. For other related works see \cite{BC, BD}. 

A polygon is called \textit{tangential} if all its sides are tangent to the same circle (its incircle). A polygon is called \textit{cyclic} if all its vertices lie on the same circle (its circumcircle). Cyclic polygons have been studied by several authors \cite{JD, RG}. A cylic polygon $P$ is called centered if its circumcircle has its center in the interior of $P$. In this article, all cyclic polygons are assumed to be centered. Throughout the article, for a cyclic (resp. tangential) polygon $P$, $R(P)$ (resp. $r(P))$ denotes its circumradius (resp. inradius), and it is assumed that all the vertices of the polygon lie in the hyperbolic plane. Motivated by the classical isoperimetric inequality, we explore analogous inequalities involving the inradius or circumradius of a hyperbolic polygon. We prove the following results:

\begin{thm}\label{ThmIminPerimeter}  
For any tangential hyperbolic \( n \)-gon \( P \),  
\( \textit{Peri}(P) \geq 2n \tanh^{-1} \left( \tan(\pi/n) \sinh r(P) \right) \),  
with equality if and only if \( P \) is regular.  
 
\end{thm}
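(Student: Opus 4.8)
The plan is to reduce the inequality to a one-variable convexity statement via the incircle decomposition, and then apply Jensen's inequality. First I would fix the incircle of $P$, with center $O$ and radius $r = r(P)$, and for each side $A_iA_{i+1}$ let $T_i$ denote the point of tangency, so that $OT_i \perp A_iA_{i+1}$ and $OT_i = r$. Drawing the radii $OA_i$ to the vertices and the radii $OT_i$ to the tangency points decomposes $P$ into $2n$ right triangles. Because the two tangent segments from a vertex $A_i$ have equal length, the right triangles $OA_iT_{i-1}$ and $OA_iT_i$ are congruent, so the angles $\angle T_{i-1}OA_i$ and $\angle A_iOT_i$ coincide; call their common value $w_i$. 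Summing the central angles $\angle A_iOA_{i+1} = w_i + w_{i+1}$ around $O$ gives the single constraint $\sum_{i=1}^n w_i = \pi$.

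Next I would compute the side lengths using hyperbolic right-triangle trigonometry. In the right triangle $OA_iT_i$ (right angle at $T_i$, leg $OT_i = r$, angle $w_i$ at $O$), the relation $\tan w_i = \tanh(A_iT_i)/\sinh r$ gives $A_iT_i = \tanh^{-1}(\sinh r\,\tan w_i)$, and likewise $T_iA_{i+1} = \tanh^{-1}(\sinh r\,\tan w_{i+1})$. Adding over the two halves of each side and then over all sides, each $w_i$ is counted exactly twice, so
\[
\operatorname{Peri}(P) = 2\sum_{i=1}^n \tanh^{-1}\!\left(\sinh r\,\tan w_i\right).
\]
Thus the theorem becomes the problem of minimizing $\sum_i f(w_i)$, where $f(w) = \tanh^{-1}(\sinh r\,\tan w)$, subject to $\sum_i w_i = \pi$.

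The heart of the argument is to show that $f$ is strictly convex on the relevant interval $\bigl(0,\arctan(1/\sinh r)\bigr)$, which contains every $w_i$ (each $w_i < \pi/2$ as an acute angle of a right triangle, and $\sinh r\,\tan w_i = \tanh(A_iT_i) < 1$). Writing $c = \sinh r$ and differentiating twice, I expect the numerator of $f''$ to simplify, after using $\sec^2 w - \tan^2 w = 1$, to a manifestly positive multiple of $\sec^2 w\,\tan w\,(1+c^2)$; this is the one genuine computation and the step I would treat as the main obstacle, since the bookkeeping of the second derivative is where a sign error could hide. Granting convexity, Jensen's inequality yields $\frac1n\sum_i f(w_i) \geq f(\pi/n)$, that is $\operatorname{Peri}(P) \geq 2n\tanh^{-1}\!\left(\tan(\pi/n)\sinh r\right)$, with equality precisely when all $w_i = \pi/n$. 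Finally I would verify that equal $w_i$ forces equal central angles, equal side lengths, and equal vertex angles, hence that equality holds exactly for the regular $n$-gon.
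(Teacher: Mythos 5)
Your proposal is correct and is essentially the paper's own argument: the same incircle decomposition into $2n$ right triangles, the same formula $\tanh^{-1}(\sinh r \tan w)$ for the half-side (the paper's variable $\theta_i$ is just your $2w_i$, with constraint $\sum \theta_i = 2\pi$), the same second-derivative computation yielding a positive multiple of $\sec^2 w \tan w\,(1+\sinh^2 r)$, and the same use of convexity (the paper's Lemma 3.1 is Jensen's inequality in disguise). Your explicit identification of the domain $\bigl(0,\arctan(1/\sinh r)\bigr)$ where the denominator stays positive is a small point the paper leaves implicit, but the route is the same.
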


\begin{thm}\label{thmIMinPerimeter}

For any cyclic hyperbolic \( n \)-gon \( P \),
\( \textit{Peri}(P) \leq 2n \sinh^{-1} \left( \sin(\pi/n) \sinh R(P)\right) \),  
with equality if and only if \( P \) is regular.  
  
\end{thm}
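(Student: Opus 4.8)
The plan is to reduce the perimeter to a sum over the central angles and then to exploit concavity. First I would join the circumcenter $O$, which lies in the interior of $P$ since $P$ is centered, to each of the $n$ vertices, cutting $P$ into $n$ isosceles triangles, the $i$-th having apex $O$, two legs equal to the circumradius $R=R(P)$, and apex angle $2\alpha_i$. Since these apex angles fill out the full angle around $O$, the governing constraint is $\sum_{i=1}^{n}\alpha_i=\pi$, with each $\alpha_i\in(0,\pi/2)$ because $P$ is centered.

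Next I would drop the perpendicular from $O$ to the $i$-th side, which bisects both the apex angle and the side. This produces a hyperbolic right triangle with hypotenuse $R$, an acute angle $\alpha_i$ at $O$, and opposite leg equal to the half-side $s_i/2$. The right-triangle relation $\sinh(\text{opposite})=\sinh(\text{hypotenuse})\sin(\text{opposite angle})$ then gives $\sinh(s_i/2)=\sinh R\,\sin\alpha_i$, hence $s_i=2\sinh^{-1}(\sinh R\,\sin\alpha_i)$ and
\[
\textit{Peri}(P)=2\sum_{i=1}^{n}\sinh^{-1}\!\left(\sinh R\,\sin\alpha_i\right).
\]

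So the theorem reduces to showing $\sum_i f(\alpha_i)\le n\,f(\pi/n)$ subject to $\sum_i\alpha_i=\pi$, where $f(\alpha):=\sinh^{-1}(\sinh R\,\sin\alpha)$. I would prove this by Jensen's inequality, for which the crucial input is that $f$ is strictly concave on $(0,\pi)$. Writing $k=\sinh R$ and differentiating twice, I expect to find $f''(\alpha)=-\,k(1+k^2)\sin\alpha\,(1+k^2\sin^2\alpha)^{-3/2}$, which is strictly negative on $(0,\pi)$ since $k>0$. Jensen then yields $\tfrac1n\sum_i f(\alpha_i)\le f\!\left(\tfrac1n\sum_i\alpha_i\right)=f(\pi/n)$, and multiplying by $2n$ gives the claimed bound. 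Strict concavity forces the inequality to be strict unless all $\alpha_i$ equal $\pi/n$, that is, unless $P$ is regular, which settles the equality case.

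I expect the only genuine work to be the concavity computation; the geometric decomposition and the Jensen step are routine once the right-triangle identity is in hand. The one point demanding care is the domain: I must ensure each $\alpha_i$ lies in the interval where $f''<0$, which is guaranteed by the centered hypothesis ($\alpha_i<\pi/2<\pi$), and that the isosceles-triangle picture is nondegenerate, again covered by centeredness. Note that this mirrors the tangential case of Theorem~\ref{ThmIminPerimeter}, with $\sinh^{-1}$ and $\sin$ replacing $\tanh^{-1}$ and $\tan$, and the inequality reversing because the relevant one-variable function is concave rather than convex.
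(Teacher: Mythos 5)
Your proposal is correct and follows essentially the same route as the paper: decompose the centered polygon into isosceles triangles at the circumcenter, use the right-triangle relation $\sinh(s_i/2)=\sinh R\,\sin\alpha_i$ (Lemma \ref{sine and cosine rules}(iii), with your $\alpha_i$ equal to the paper's $\theta_i/2$), verify strict concavity of $\alpha\mapsto\sinh^{-1}(\sinh R\,\sin\alpha)$, and conclude by Jensen's inequality, which is exactly the content of the paper's Corollary \ref{Concave corolary}. Your second-derivative computation is in fact cleaner than the one printed in the paper (which contains typographical errors but reaches the same sign conclusion), and your explicit handling of the equality case via strict concavity is a mild improvement.
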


An immediate consequence of Theorems \ref{ThmIminPerimeter} and \ref{thmIMinPerimeter}, which establish the relationship between the inradius and circumradius, is as follows:
\begin{cor}
For any hyperbolic \( n \)-gon \( P \),
\[
r(P) \geq  \sinh^{-1} \left( \frac{\tan(\pi/n)}{\tan \left( 2n \sinh^{-1} \left( \sin(\pi/n) \sinh R(P) \right) \right) } \right).
\]
\end{cor}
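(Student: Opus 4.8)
The plan is to note first that the right-hand side involves both $r(P)$ and $R(P)$, so the corollary is really a statement about a polygon $P$ that is simultaneously tangential and cyclic (bicentric); only then are both radii defined and both theorems available. For such a $P$ the two theorems bound the \emph{same} quantity $\textit{Peri}(P)$ from the two sides, giving the sandwich
\[
2n\tanh^{-1}\!\bigl(\tan(\pi/n)\sinh r(P)\bigr)\;\le\;\textit{Peri}(P)\;\le\;2n\sinh^{-1}\!\bigl(\sin(\pi/n)\sinh R(P)\bigr).
\]
Eliminating $\textit{Peri}(P)$ leaves a single inequality tying $r(P)$ to $R(P)$, and the whole corollary is the result of solving that inequality for $r(P)$.

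The key step is the inversion. I would record that $x\mapsto\tanh^{-1}\!\bigl(\tan(\pi/n)\sinh x\bigr)$ and $x\mapsto\sinh^{-1}\!\bigl(\sin(\pi/n)\sinh x\bigr)$ are strictly increasing on the admissible range (the argument $\tan(\pi/n)\sinh r(P)<1$ needed for $\tanh^{-1}$ is automatic, since for the extremal regular tangential $n$-gon it equals $\tanh(\tfrac{a}{2})<1$, with $a$ the side length). Applying the appropriate inverse then transfers the bound onto $\sinh r(P)$, and the nested inverse-hyperbolic expressions are collapsed using $\tanh(\sinh^{-1}t)=t/\sqrt{1+t^2}$ and $\sinh(\tanh^{-1}t)=t/\sqrt{1-t^2}$. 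Carrying the factor $2n$ and the angle $\pi/n$ through this simplification is what converts the perimeter term $2n\sinh^{-1}\!\bigl(\sin(\pi/n)\sinh R(P)\bigr)$ into the quotient displayed in the statement.

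The step I expect to be the main obstacle is precisely this inversion bookkeeping: verifying strict monotonicity on the correct domain so the inequality does not flip, keeping track of which inverse is applied to which side, and confirming that the simplification places the angle factor in the numerator and produces exactly the stated quotient rather than its reciprocal. Once that is pinned down, the equality case is inherited directly from Theorems~\ref{ThmIminPerimeter} and \ref{thmIMinPerimeter}: equality forces equality in both, so $P$ is the regular $n$-gon, for which the two right-triangle relations $\sinh(\tfrac{a}{2})=\sin(\pi/n)\sinh R(P)$ and $\tanh(\tfrac{a}{2})=\tan(\pi/n)\sinh r(P)$ hold simultaneously and make the bound sharp.
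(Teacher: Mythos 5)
Your overall strategy --- reading the corollary as a statement about bicentric polygons and sandwiching $\textit{Peri}(P)$ between Theorems \ref{ThmIminPerimeter} and \ref{thmIMinPerimeter} --- is exactly the derivation the paper intends (the paper gives no proof beyond calling the corollary an immediate consequence of those two theorems). But the step you yourself flag as the main obstacle, the inversion bookkeeping, genuinely fails, and it cannot be repaired. The sandwich gives
\[
2n\tanh^{-1}\bigl(\tan(\pi/n)\sinh r(P)\bigr)\;\le\;2n\sinh^{-1}\bigl(\sin(\pi/n)\sinh R(P)\bigr),
\]
and since $\tanh$ is strictly increasing, dividing by $2n$ and applying $\tanh$ to both sides, together with $\tanh(\sinh^{-1}t)=t/\sqrt{1+t^{2}}$, yields
\[
\sinh r(P)\;\le\;\frac{\sin(\pi/n)\sinh R(P)}{\tan(\pi/n)\sqrt{1+\sin^{2}(\pi/n)\sinh^{2}R(P)}}.
\]
This is an \emph{upper} bound on $r(P)$, with $\tan(\pi/n)$ in the denominator, with the hyperbolic tangent, and with the factor $2n$ cancelled --- in all four respects the opposite of the printed statement, which asserts a lower bound with $\tan(\pi/n)$ in the numerator and the \emph{circular} tangent applied to the full perimeter expression $2n\sinh^{-1}(\cdots)$ (a circular function of a hyperbolic length, which should already raise suspicion). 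No correct manipulation of the sandwich can convert this $\le$ into the stated $\ge$, so your claim that the simplification ``places the angle factor in the numerator and produces exactly the stated quotient'' is precisely where the proof breaks.

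In fact the corollary as printed is false, so no proof of it can succeed. Take $n=4$ and the regular $4$-gon with circumradius $R\approx 0.544$, chosen so that $2n\sinh^{-1}(\sin(\pi/n)\sinh R)\approx \pi+0.01$. Then the stated right-hand side is $\sinh^{-1}\bigl(1/\tan(\pi+0.01)\bigr)\approx \sinh^{-1}(100)\approx 5.3$, while the actual inradius, obtained from $\tanh r=\cos(\pi/4)\tanh R$, is $r\approx 0.37$; for other values of $R$ the circular tangent is negative and the stated bound is vacuous. The printed corollary is evidently a garbled transcription of the correct consequence displayed above. Your plan, carried out carefully, proves that correct statement --- and your equality analysis via the two right-triangle identities $\sinh b=\sin(\pi/n)\sinh R$ and $\tanh b=\tan(\pi/n)\sinh r$ for the regular $n$-gon pertains to it --- but it does not and cannot establish the inequality as printed; a faithful write-up should derive the upper bound and note the discrepancy rather than assert the stated quotient.
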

\begin{thm} \label{ThmIMaxArea}
For any tangential hyperbolic \( n \)-gon \( P \), 
\( \textit{Area}(P) \geq (n-2)\pi - 2n \cos^{-1}\left( \sin (\pi/n)\cosh r(P) \right) \),  
with equality if and only if \( P \) is regular.   
\end{thm}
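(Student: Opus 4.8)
The plan is to decompose $P$ from its incenter and reduce the inequality to a single-variable concavity (Jensen) argument, in the same spirit as the perimeter bound of Theorem \ref{ThmIminPerimeter}. Let $O$ be the incenter and $r = r(P)$. Joining $O$ to each vertex and dropping the perpendicular $OT$ (of length $r$) from $O$ to each side at its tangency point $T$ subdivides $P$ into $2n$ right-angled triangles, each with the right angle at a tangency point, one leg equal to $r$, hypotenuse a segment $OV$ to a vertex, an angle $\alpha_j$ at $O$, and an angle $\beta_j$ at the vertex. Going once around $O$ gives the constraint $\sum_{j=1}^{2n}\alpha_j = 2\pi$, while the interior angle of $P$ at each vertex is the sum of the two $\beta_j$ adjacent to it, so $\sum_i \gamma_i = \sum_{j=1}^{2n}\beta_j$.

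First I would invoke Gauss--Bonnet, which gives $\textit{Area}(P) = (n-2)\pi - \sum_{j=1}^{2n}\beta_j$. Next, in each right triangle the leg $r$ is opposite the vertex angle $\beta_j$ and adjacent to $\alpha_j$, so the standard hyperbolic right-triangle identity $\cos\beta_j = \cosh r\,\sin\alpha_j$ holds; equivalently $\beta_j = \arccos(\cosh r\,\sin\alpha_j)$. Writing $g(\alpha) = \arccos(\cosh r\,\sin\alpha)$, the theorem reduces to the claim
\[
\sum_{j=1}^{2n} g(\alpha_j) \leq 2n\, g(\pi/n), \qquad \sum_{j=1}^{2n}\alpha_j = 2\pi,
\]
with equality iff all the $\alpha_j$ are equal, since the regular $n$-gon is precisely the case $\alpha_j = \pi/n$ for every $j$.

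The heart of the argument, and the step I expect to be the main obstacle, is to verify that $g$ is (strictly) concave on the relevant range. Setting $k = \cosh r \geq 1$, a direct computation gives $g''(\alpha) = -\,k(k^2-1)\sin\alpha\,(1 - k^2\sin^2\alpha)^{-3/2}$, which is $\leq 0$ (and $<0$ for $r>0$) wherever $g$ is defined, i.e. for $\sin\alpha < 1/\cosh r$. Since every half-angle $\alpha_j$ is acute and satisfies $\cosh r\,\sin\alpha_j = \cos\beta_j \in (0,1)$, all the $\alpha_j$---and hence their common mean $\pi/n$---lie in the interval $(0,\arcsin(1/\cosh r))$, on which $g$ is concave. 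The delicate points are confirming that this domain is a genuine interval (so that Jensen applies) and pinning down the correct form of the hyperbolic identity; both are handled above.

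Finally I would apply Jensen's inequality to the concave function $g$, namely $\frac{1}{2n}\sum_{j=1}^{2n} g(\alpha_j) \leq g\!\left(\frac{1}{2n}\sum_{j=1}^{2n}\alpha_j\right) = g(\pi/n)$. This yields $\sum_{j=1}^{2n}\beta_j \leq 2n\arccos(\cosh r\,\sin(\pi/n))$, hence $\textit{Area}(P) \geq (n-2)\pi - 2n\arccos(\sin(\pi/n)\cosh r)$, as required. Strict concavity forces equality in Jensen exactly when all the $\alpha_j$ coincide, i.e. when all $2n$ right triangles are congruent, which happens precisely when $P$ is regular.
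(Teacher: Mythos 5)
Your proof is correct and follows essentially the same route as the paper's: the paper also decomposes $P$ from the incenter, uses the right-triangle identity $\cos(\phi_i/2)=\cosh r\,\sin(\theta_i/2)$ (Lemma \ref{sine and cosine rules}(v)), verifies the same concavity computation, and applies its convexity lemma (Corollary \ref{Concave corolary}, i.e.\ Jensen) under the constraint that the central angles sum to $2\pi$. The only cosmetic differences are that you work with $2n$ half-kite triangles rather than $n$ kites, and you are somewhat more careful than the paper about the domain $\sin\alpha < 1/\cosh r$ on which the concave function is actually defined.
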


\begin{thm} \label{ThmCMinArea}
For any cyclic hyperbolic \( n \)-gon \( P \),
 \( \textit{Area}(P) \geq (n-2)\pi -2n \cot^{-1} \left( \tan(\pi/n)\cosh R(P) \right) \),  
with equality if and only if \( P \) is regular.  
    
\end{thm}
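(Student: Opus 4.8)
The plan is to reduce the claim to a one‑variable convexity estimate through the Gauss--Bonnet (angle‑defect) area formula. Since $P$ is centered, joining the circumcenter $O$ to the $n$ vertices cuts $P$ into $n$ isosceles triangles $T_i = OV_iV_{i+1}$, each with two legs of length $R := R(P)$, apex (central) angle $\phi_i$ at $O$, and two equal base angles $\beta_i$; the central angles satisfy $\sum_{i=1}^n \phi_i = 2\pi$. By Gauss--Bonnet each triangle has area $\pi - \phi_i - 2\beta_i$, so summing gives
\[
\textit{Area}(P) = \sum_{i=1}^n(\pi - \phi_i - 2\beta_i) = (n-2)\pi - 2\sum_{i=1}^n \beta_i .
\]
Thus the theorem is equivalent to a sharp estimate for $\sum_i \beta_i$ under the constraint $\sum_i \phi_i = 2\pi$.

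Next I would express $\beta_i$ explicitly in terms of $\phi_i$ and $R$. Dropping the perpendicular from $O$ to the side $V_iV_{i+1}$ (whose foot is the midpoint, since the triangle is isosceles) splits $T_i$ into two right triangles with hypotenuse $R$ and acute angles $\phi_i/2$ and $\beta_i$. The hyperbolic right‑triangle identity $\cosh(\text{hypotenuse}) = \cot A\,\cot B$ for the two acute angles then yields
\[
\cosh R = \cot(\phi_i/2)\,\cot\beta_i , \qquad\text{hence}\qquad \beta_i = \cot^{-1}\!\big(\cosh R\,\tan(\phi_i/2)\big).
\]
Writing $\psi_i := \phi_i/2$, the constraint reads $\sum_i \psi_i = \pi$ with $\psi_i \in (0,\pi/2)$, and $\sum_i\beta_i = \sum_i f(\psi_i)$ where $f(\psi) := \cot^{-1}(\cosh R\,\tan\psi)$.

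The crux, and the step I expect to be decisive, is the convexity type of $f$, which simultaneously fixes the direction of the inequality and the equality case. A direct differentiation would give
\[
f''(\psi) = \frac{2\cosh R\,(\cosh^2 R - 1)\,\sec^2\psi\,\tan\psi}{\big(1 + \cosh^2 R\,\tan^2\psi\big)^2},
\]
which is strictly positive on $(0,\pi/2)$ because $\cosh R > 1$. Hence $f$ is strictly convex, and Jensen's inequality at the mean $\tfrac1n\sum_i\psi_i = \pi/n$ gives $\sum_i f(\psi_i) \ge n\,f(\pi/n)$, that is
\[
\sum_{i=1}^n \beta_i \;\ge\; n\,\cot^{-1}\!\big(\tan(\pi/n)\cosh R\big),
\]
with equality precisely when all $\psi_i$ coincide, i.e.\ when $P$ is regular. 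Substituting into the area identity produces the sharp relation between area and circumradius, with the regular polygon as the unique extremizer.

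The main obstacle is exactly this sign of $f''$. Strict convexity forces the regular polygon to be the \emph{maximizer} of area at fixed circumradius, so the sharp bound obtained is
\[
\textit{Area}(P) \;\le\; (n-2)\pi - 2n\,\cot^{-1}\!\big(\tan(\pi/n)\cosh R\big),
\]
equality if and only if $P$ is regular. The right‑hand side and the equality case are exactly as displayed in the statement, but the inequality is oriented with $\le$ (the reverse of the printed sign), matching the companion cyclic‑perimeter result Theorem~\ref{thmIMinPerimeter}, where the regular polygon is likewise the maximizer. No ingredient beyond the convexity computation and Jensen is required; the only delicate point is that $\cosh R>1$, which is what makes $f$ strictly convex rather than affine as in the degenerate Euclidean limit $R\to 0$ (where every such polygon has zero defect and the bound is an equality).
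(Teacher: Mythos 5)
Your proof is correct, and it follows essentially the same route as the paper: decompose the centered polygon from its circumcenter into $n$ isosceles triangles, use the right-triangle identity $\cosh R=\cot(\theta_i/2)\cot\beta_i$ (Lemma \ref{sine and cosine rules}(ii)) to get $\beta_i=\cot^{-1}\bigl(\cosh R\tan(\theta_i/2)\bigr)$, and optimize $\sum_i\beta_i$ subject to $\sum_i\theta_i=2\pi$ by a one-variable convexity argument (the paper's Lemma \ref{convex lemma} and Corollary \ref{Concave corolary} are exactly Jensen at the mean). The one place where you diverge is the sign of the second derivative, and there you are right and the paper is wrong. With $M=\cosh R>1$,
\[
\frac{d^2}{d\theta^2}\,\cot^{-1}\!\bigl(M\tan(\theta/2)\bigr)
=\frac{M(M^2-1)\sec^2(\theta/2)\tan(\theta/2)}{2\bigl(1+M^2\tan^2(\theta/2)\bigr)^2}>0
\qquad\text{on }(0,\pi),
\]
because the quotient rule produces the factor $1+M^2\tan^2(\theta/2)-M^2\sec^2(\theta/2)=1-M^2<0$, whose minus sign cancels the outer minus; the paper's displayed computation drops this sign (it shows a positive right-hand side and then concludes $\phi''<0$, a non sequitur). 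So $\phi$ is strictly convex, not concave: the regular configuration \emph{minimizes} $\sum_i\beta_i$ and hence \emph{maximizes} the area, and the sharp statement is
\[
\textit{Area}(P)\;\le\;(n-2)\pi-2n\cot^{-1}\!\bigl(\tan(\pi/n)\cosh R(P)\bigr),
\]
with equality if and only if $P$ is regular --- the reverse of the printed sign, exactly as you conclude. A numerical check confirms the printed theorem is false as stated: for $n=3$, $R=1$, the centered cyclic triangle with central angles $(5\pi/6,\,5\pi/6,\,\pi/3)$ has $\sum_i\beta_i\approx 1.187$ and $\textit{Area}\approx 0.768$, while the regular triangle with the same circumradius has $\textit{Area}=\pi-6\cot^{-1}(\sqrt{3}\cosh 1)\approx 0.994$; so $\textit{Area}$ can fall strictly below the claimed lower bound. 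This also matches the Euclidean intuition (among $n$-gons inscribed in a fixed circle, the regular one maximizes area) and is consistent with Theorem \ref{thmIMinPerimeter}, where the regular polygon likewise maximizes perimeter at fixed circumradius. Note the knock-on effect: the proof of Theorem \ref{ThmTCMaxArea} reduces to regular polygons by citing Theorem \ref{ThmCMinArea}, so it inherits the same sign problem. Two small points on your write-up: your bookkeeping $\textit{Area}(P)=(n-2)\pi-2\sum_i\beta_i$ is the careful version of the paper's $(n-2)\pi-\sum_i\phi_i$ (each interior angle is the sum of two adjacent base angles), and you should state explicitly that centeredness forces $\theta_i\in(0,\pi)$, i.e.\ $\psi_i\in(0,\pi/2)$, which is the interval on which your positivity of $f''$ is used.
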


Next, we prove isoperimetric type inequalities involving circumradius or inradius for multiple polygons. In particular, we prove the following results:

\begin{thm}\label{ThmTCMinPerimeter}
Let \(P_1,\dots, P_k\) be cyclic regular hyperbolic \(n\)-gons with a given total circumradius 
\(\sum_{i=1}^k R(P_i) = T\). 
Then 
\(\sum_{i=1}^k \textit{Peri}(P_i) \geq 2nk \sinh^{-1}\left(\sin(\pi/n)\sinh(T/k)\right)\),
with equality if and only if all \(P_i\) are isometric to a regular \(n\)-gon of circumradius $T/k$.
   
\end{thm}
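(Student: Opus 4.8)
The plan is to exploit the hypothesis that each $P_i$ is \emph{regular}, which forces Theorem~\ref{thmIMinPerimeter} to hold with equality and hence gives an exact formula for the perimeter in terms of the circumradius. Writing $c=\sin(\pi/n)$ and $f(R)=\sinh^{-1}(c\sinh R)$, the equality case of Theorem~\ref{thmIMinPerimeter} yields $\textit{Peri}(P_i)=2n\,f\bigl(R(P_i)\bigr)$ for every $i$. Substituting this into both sides of the asserted inequality and dividing through by $2nk$, the statement reduces to
\[
\frac{1}{k}\sum_{i=1}^k f\bigl(R(P_i)\bigr)\ \geq\ f\!\left(\frac{1}{k}\sum_{i=1}^k R(P_i)\right),
\]
which is precisely Jensen's inequality for $f$ evaluated at the points $R(P_i)$, since $\tfrac1k\sum_i R(P_i)=T/k$. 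Thus the whole theorem follows once I show that $f$ is convex on $(0,\infty)$, and strictly so in order to pin down the equality case.

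To establish convexity I would simply differentiate twice. A direct computation gives $f'(R)=c\cosh R/\sqrt{1+c^2\sinh^2 R}$, and after combining the quotient and using $\cosh^2 R-\sinh^2 R=1$ to collapse the numerator, one obtains
\[
f''(R)=\frac{c\,(1-c^2)\,\sinh R}{\bigl(1+c^2\sinh^2 R\bigr)^{3/2}}.
\]
I expect this sign analysis to be the only point requiring care, and it is where the specific form of $c$ matters: because $n\geq 3$ we have $c=\sin(\pi/n)\in(0,1)$, so both $c>0$ and $1-c^2>0$; together with $\sinh R>0$ for $R>0$ and the manifestly positive denominator, this forces $f''(R)>0$ on all of $(0,\infty)$. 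Hence $f$ is strictly convex there.

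With strict convexity in hand, Jensen's inequality delivers the reduced display above, and multiplying back by $2nk$ recovers $\sum_i \textit{Peri}(P_i)\geq 2nk\,\sinh^{-1}\bigl(c\sinh(T/k)\bigr)$. Strictness is exactly what controls the equality discussion: equality in Jensen's inequality for a strictly convex $f$ holds if and only if all the arguments $R(P_i)$ coincide, i.e. $R(P_i)=T/k$ for every $i$. Since each $P_i$ is a regular $n$-gon and a regular $n$-gon is determined up to isometry by its circumradius, equal circumradii mean the $P_i$ are mutually isometric to the regular $n$-gon of circumradius $T/k$, which is the asserted equality case.
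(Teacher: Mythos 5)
Your proof is correct and follows essentially the same route as the paper: both express the perimeter of a regular cyclic $n$-gon as $2n\sinh^{-1}\bigl(\sin(\pi/n)\sinh R\bigr)$, compute the same second derivative $\frac{c(1-c^2)\sinh R}{(1+c^2\sinh^2 R)^{3/2}}>0$, and conclude by convexity under the constraint $\sum_i R_i = T$ (the paper's Lemma~\ref{convex lemma} is just Jensen's inequality in this setting). Your explicit appeal to \emph{strict} convexity to settle the equality case is in fact slightly more careful than the paper, whose convexity lemma as stated only guarantees that the equal-radii point is \emph{a} global minimizer, not that it is the unique one.
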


\begin{thm}\label{ThmTIMinTperimeter}
Let \(P_1,\dots, P_k\) be tangential hyperbolic \(n\)-gons with a given total inradius 
\(\sum_{i=1}^k r(P_i) = T\).
Then 
\(\sum_{i=1}^k \textit{Peri}(P_i) \geq 2nk \tanh^{-1}\left(\tan(\pi/n)\sinh(T/k)\right)\),
with equality if and only if all \(P_i\) are isometric to a regular \(n\)-gon of inradius $T/k$.
  
\end{thm}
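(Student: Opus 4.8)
The plan is to reduce the multi‑polygon statement to the single‑polygon bound of Theorem~\ref{ThmIminPerimeter} together with a convexity (Jensen) argument. Write $c = \tan(\pi/n)$ and $f(r) = \tanh^{-1}(c\sinh r)$, so that Theorem~\ref{ThmIminPerimeter} reads $\textit{Peri}(P) \ge 2n\,f(r(P))$ for every tangential hyperbolic $n$‑gon $P$, with equality exactly when $P$ is regular. Applying this to each $P_i$ and summing gives
\[
\sum_{i=1}^k \textit{Peri}(P_i) \ge 2n \sum_{i=1}^k f(r(P_i)),
\]
so it suffices to bound $\sum_i f(r_i)$ from below over all tuples $(r_1,\dots,r_k)$ of admissible inradii with $\sum_i r_i = T$.

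First I would pin down the domain. For $\tanh^{-1}(c\sinh r)$ to be real one needs $c\sinh r < 1$, i.e. $\sinh r < \cot(\pi/n)$, and in fact the inradius of \emph{any} tangential $n$‑gon lies in the open interval $I = (0,\,\sinh^{-1}(\cot(\pi/n)))$: each vertex is the apex of a right triangle with one leg $r$ and opposite half‑angle $\phi_i$, where $\sum_i \phi_i = \pi$ (so $\max_i \phi_i \ge \pi/n$) and finiteness of the vertex forces $\tanh(\ell_i/2) = \sinh r\,\tan\phi_i < 1$, hence $\sinh r < \cot(\max_i\phi_i) \le \cot(\pi/n)$. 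Since $I$ is an interval, the constrained mean $T/k$ also lies in $I$. The heart of the argument is then to show that $f$ is strictly convex on $I$. I would differentiate,
\[
f'(r) = \frac{c\cosh r}{1 - c^2\sinh^2 r},
\]
and observe that on $I$ the numerator $c\cosh r$ is positive and strictly increasing while the denominator $1 - c^2\sinh^2 r$ is positive and strictly decreasing; thus $f'$ is a product of two positive increasing functions, hence strictly increasing, and $f$ is strictly convex. (One could instead grind out $f''$, but this quotient‑monotonicity observation avoids that computation.)

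With convexity secured, Jensen's inequality gives $\tfrac1k\sum_i f(r_i) \ge f\!\left(\tfrac1k\sum_i r_i\right) = f(T/k)$, i.e. $\sum_i f(r_i) \ge k\,f(T/k)$, with equality if and only if all $r_i$ are equal, forcing $r_i = T/k$. Combining with the summed form of Theorem~\ref{ThmIminPerimeter} yields $\sum_i \textit{Peri}(P_i) \ge 2nk\,\tanh^{-1}(\tan(\pi/n)\sinh(T/k))$. For equality, the first step is tight exactly when each $P_i$ is regular, and the Jensen step is tight exactly when each $r_i = T/k$; together these force every $P_i$ to be a regular $n$‑gon of inradius $T/k$, all mutually isometric, as claimed.

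The step I expect to be the main obstacle is the convexity of $f$ on the full admissible range $I$: one must check that the competing monotonicities of numerator and denominator really do combine to a convex $f$, and verify that every inradius together with the mean $T/k$ genuinely lies in $I$ (so that Jensen may be applied there). Once this is in place, the remainder is a formal combination of Theorem~\ref{ThmIminPerimeter} with Jensen's inequality.
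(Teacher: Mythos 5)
Your proof is correct and follows essentially the same route as the paper: reduce to the regular case via Theorem~\ref{ThmIminPerimeter}, then minimize $\sum_i \tanh^{-1}\left(\tan(\pi/n)\sinh r_i\right)$ subject to $\sum_i r_i = T$ by convexity, where your Jensen step is exactly the paper's Lemma~\ref{convex lemma}. Your extra care --- verifying that every admissible inradius and the mean $T/k$ lie in $I = \left(0,\sinh^{-1}(\cot(\pi/n))\right)$, and proving convexity via monotonicity of $f'$ rather than the paper's unverified assertion that $b''>0$ --- only firms up details the paper leaves implicit.
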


\begin{thm}\label{ThmTCMaxArea}

Let \(P_1,\dots, P_k\) be cyclic hyperbolic \(n\)-gons with a given total circumradius 
\(\sum_{i=1}^k R(P_i) = T\). 
Then
\(\sum_{i=1}^k Area(P_i) \geq k(n-2)\pi - 2nk \cot^{-1}\left(\tan(\pi/n)\cosh(T/k)\right)\), with equality if and only if all \(P_i\) are isometric to a regular \(n\)-gon of circumradius $T/k$.
  
\end{thm}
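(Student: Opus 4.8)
The plan is to reduce the statement to the single–polygon estimate of Theorem~\ref{ThmCMinArea} and then to a one–variable convexity argument. First I would apply Theorem~\ref{ThmCMinArea} to each $P_i$, obtaining
\[
\mathrm{Area}(P_i)\ \geq\ (n-2)\pi-2n\cot^{-1}\!\left(\tan(\pi/n)\cosh R(P_i)\right),
\]
with equality exactly when $P_i$ is regular. Since the right–hand side depends only on $R(P_i)$, replacing each $P_i$ by the regular $n$–gon of the same circumradius neither increases the total area nor disturbs the constraint $\sum_i R(P_i)=T$; as we are proving a lower bound, it therefore suffices to treat the case in which every $P_i$ is regular, where $\mathrm{Area}(P_i)=g(R_i)$ exactly, with $R_i=R(P_i)$ and
\[
g(R):=(n-2)\pi-2n\cot^{-1}\!\left(\tan(\pi/n)\cosh R\right).
\]
The problem then becomes the constrained minimization of $\sum_{i=1}^k g(R_i)$ subject to $R_i\geq 0$ and $\sum_i R_i=T$.

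Next I would establish that $g$ is convex, which yields the result via the supporting–line form of Jensen's inequality: writing $R_0=T/k$, convexity gives $g(R_i)\geq g(R_0)+g'(R_0)(R_i-R_0)$ for each $i$, and summing over $i$ makes the linear terms cancel since $\sum_i(R_i-R_0)=T-kR_0=0$, leaving $\sum_i g(R_i)\geq k\,g(T/k)$, which is exactly the asserted bound. To check convexity I would differentiate twice; with $a=\tan(\pi/n)$ one finds
\[
g'(R)=\frac{2na\sinh R}{1+a^{2}\cosh^{2}R},\qquad
g''(R)=\frac{2na\cosh R\,\bigl(1+2a^{2}-a^{2}\cosh^{2}R\bigr)}{\left(1+a^{2}\cosh^{2}R\right)^{2}},
\]
where I have used $\cosh^{2}R-2\sinh^{2}R=2-\cosh^{2}R$ to simplify the numerator. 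Thus the sign of $g''$ is governed entirely by the factor $1+2a^{2}-a^{2}\cosh^{2}R$, and verifying its nonnegativity on the relevant range of circumradii is the heart of the matter.

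I expect this sign analysis to be the main obstacle. In the perimeter estimates (Theorems~\ref{ThmTCMinPerimeter} and \ref{ThmTIMinTperimeter}) the analogous second derivative carries the clean factor $1-\sin^{2}(\pi/n)>0$, so convexity is automatic; here the factor $1+2a^{2}-a^{2}\cosh^{2}R$ is considerably more delicate and must be controlled on the range of $R$ permitted by the hypotheses. Once convexity is secured, the equality discussion is routine: equality in the reduction step forces each $P_i$ to be regular, while strict convexity of $g$ forces $R_1=\cdots=R_k=T/k$ in the Jensen step, so that equality holds precisely when every $P_i$ is isometric to the regular $n$–gon of circumradius $T/k$, as claimed.
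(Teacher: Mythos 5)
Your reduction to regular polygons via Theorem~\ref{ThmCMinArea} and your derivative computations are correct, and your route is essentially the paper's own: the paper works with the half-angle $\phi(R)=\cot^{-1}(\tan(\pi/n)\cosh R)$ and its Corollary~\ref{Concave corolary}, which is the same as your $g=(n-2)\pi-2n\phi$ together with the supporting-line form of Jensen. But the step you deferred --- the sign of $1+2a^{2}-a^{2}\cosh^{2}R$ --- is not a delicate verification awaiting completion; it fails on most of the range. That factor is positive exactly when $\cosh^{2}R<2+\cot^{2}(\pi/n)$, equivalently $\sinh R<1/\sin(\pi/n)$, and negative beyond, so $g$ is convex--concave (sigmoidal) rather than convex. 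This is forced by general principles: $g$ increases to the ideal-polygon limit $(n-2)\pi$, and a bounded, nonconstant, increasing function on $(0,\infty)$ cannot be convex. Consequently the tangent line to $g$ at $R_0=T/k$ eventually lies above the graph, and the Jensen step collapses whenever some $R_i$ can enter the concave region --- which the hypotheses do not forbid, since no upper bound is imposed on the individual circumradii.

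Moreover, the gap cannot be closed, because the stated inequality is itself false for large $T$. Take $n=3$, $k=2$, $T=10$, so $a=\sqrt{3}$ and $g(R)=\pi-6\cot^{-1}\left(\sqrt{3}\cosh R\right)$. The claimed bound is $2g(5)\approx 6.19$, but the regular triangles with circumradii $R_1=9.9$ and $R_2=0.1$ have total area $g(9.9)+g(0.1)\approx 3.14+0.01=3.15$, far below it. Indeed, once $\sinh(T/k)>1/\sin(\pi/n)$ the equal split is not even a local minimum: $g''(T/k)<0$ means the symmetric perturbation $R_{1,2}=T/k\pm h$ strictly decreases the total area. So the theorem needs an additional hypothesis confining the circumradii to the convexity region $\sinh R\leq 1/\sin(\pi/n)$. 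You should also know that the paper's own proof has precisely the same defect: it asserts ``one can see that $\phi''(R_i)<0$'' without justification, and that claim is false for $\sinh R_i>1/\sin(\pi/n)$. Your instinct that this sign analysis is the heart of the matter --- and your contrast with Theorem~\ref{ThmTCMinPerimeter}, where the clean factor $\cos^{2}(\pi/n)$ does give global convexity --- was exactly right; the difficulty is real, and it is fatal both to your argument and to the paper's.
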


\begin{thm}\label{TIMinTarea}
Let \(P_1,\dots, P_k\) be tangential hyperbolic \(n\)-gons with a given total inradius 
\(\sum_{i=1}^k r(P_i) = T\).
Then
\(\sum_{i=1}^k Area(P_i) \geq k(n-2)\pi - 2nk \cos^{-1}\left( \sin(\pi/n)\cosh(T/k)\right)\)
holds, with equality if and only if all \(P_i\) are isometric to a regular \(n\)-gon of inradius $T/k$.

\end{thm}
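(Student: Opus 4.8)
The plan is to reduce the statement to the single-polygon bound of Theorem~\ref{ThmIMaxArea} together with a Jensen-type averaging argument, exactly paralleling the structure of the preceding multi-polygon results (Theorems~\ref{ThmTCMinPerimeter}--\ref{ThmTCMaxArea}). First I would apply Theorem~\ref{ThmIMaxArea} to each $P_i$ separately, obtaining $\mathrm{Area}(P_i) \geq (n-2)\pi - 2n\arccos\!\left(\sin(\pi/n)\cosh r(P_i)\right)$, and then sum over $i$ to get
\[
\sum_{i=1}^k \mathrm{Area}(P_i) \;\geq\; k(n-2)\pi - 2n\sum_{i=1}^k g\big(r(P_i)\big), \qquad g(x):=\arccos\!\left(\sin(\tfrac{\pi}{n})\cosh x\right).
\]
It then suffices to show $\sum_{i=1}^k g\big(r(P_i)\big) \leq k\,g(T/k)$, which is precisely Jensen's inequality applied to the average $\frac{1}{k}\sum r(P_i) = T/k$, provided $g$ is concave.

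Set $a := \sin(\pi/n) \in (0,1)$ (here $n\geq 3$). Before invoking Jensen I would record that every tangential $n$-gon satisfies $a\cosh r \leq 1$, so that $g$ is defined on the relevant interval. Decomposing the polygon into $2n$ right triangles by joining the incentre to the vertices and points of tangency, the central angles $\theta_i$ sum to $2\pi$, and the right-triangle relation $\cos\phi_i = \cosh r\,\sin\theta_i$ (with $\phi_i$ the half-angle at a vertex) gives $\sin\theta_i \leq 1/\cosh r$; averaging the $2n$ central angles forces $\sin(\pi/n) \leq 1/\cosh r$. In particular each $r(P_i)$, and hence their average $T/k$, lies in $[0,r_0)$ with $\cosh r_0 = \csc(\pi/n)$, so the right-hand side of the claimed inequality is meaningful.

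The main (and essentially only) computational step is the concavity of $g$ on $[0,r_0)$. Differentiating twice and simplifying with $\cosh^2 x - \sinh^2 x = 1$ gives
\[
g''(x) = -\frac{(1-a^2)\,a\cosh x}{\left(1 - a^2\cosh^2 x\right)^{3/2}},
\]
which is strictly negative throughout the domain since $0<a<1$ and $a\cosh x<1$ there. Thus $g$ is strictly concave, and Jensen's inequality yields the stated bound. I expect this concavity verification to be the only point requiring care: one might worry about the behaviour as $a\cosh x \to 1$ (where the denominator degenerates), but the sign of $g''$ is unaffected.

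Finally, for the equality case I would trace both inequalities. Equality in the summed bound requires equality in Theorem~\ref{ThmIMaxArea} for every $i$, i.e.\ each $P_i$ regular, while strict concavity of $g$ forces equality in Jensen exactly when all $r(P_i)$ coincide, hence equal $T/k$. Since regular hyperbolic $n$-gons of equal inradius are isometric, the two conditions together give that all $P_i$ are isometric to the regular $n$-gon of inradius $T/k$. Everything beyond the concavity check is a routine assembly of the single-polygon theorem with Jensen, so I would expect the write-up to mirror that of the companion multi-polygon theorems.
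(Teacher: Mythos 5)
Your proposal is correct and follows essentially the same route as the paper: reduce to regular polygons via Theorem~\ref{ThmIMaxArea}, then apply the Jensen-type averaging (the paper's Corollary~\ref{Concave corolary}) to the concave function $\phi(r)=\cos^{-1}\left(\sin(\pi/n)\cosh r\right)$ under the constraint $\sum_i r(P_i)=T$. Your write-up is in fact more complete than the paper's, which merely asserts $\phi''(r_i)<0$ and defers to ``similar arguments,'' whereas you compute $g''$ explicitly, verify the domain condition $\sin(\pi/n)\cosh r\le 1$, and trace the equality case.
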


Motivated by the work of Sanki and Vadnere \cite{BSV}, we prove isoperimetric inequalities for multiple polygons with fixed total area or fixed total perimeter with some constraints. In particular, we prove the following results:

\begin{thm}\label{Total perimeter minimize}
Let \(P_1, \dots, P_k\) be hyperbolic \(n\)-gons with a fixed total area,  
\(\sum_{i=1}^{k} \textit{Area}(P_i) = T\),  
satisfying  
\(\textit{Area}(P_i) > (n-2)\pi - 2n \sin^{-1} \left( \sqrt{1 - \sin (\pi/n)} \right)\)  
for \(i = 1, \dots, k\).  
Then, we have  \[
\sum_{i=1}^k \textit{Peri}(P_i) \geq 2nk \cosh^{-1} \left(\frac{\cos (\pi/n)}{\sin\left((n-2)\pi - T/k\right)/2n} \right),
\]
with equality if and only if all \(P_i\) are isometric to a regular polygon of area \(T/k\).
\end{thm}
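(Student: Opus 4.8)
The plan is to reduce the multi-polygon statement to a one-variable convexity inequality. First I would invoke the hyperbolic discrete isoperimetric inequality of Bezdek \cite{BK}: among all hyperbolic \(n\)-gons of a fixed area \(A\), the regular \(n\)-gon has the least perimeter. Writing \(f(A)\) for the perimeter of the regular hyperbolic \(n\)-gon of area \(A\), this gives \(\textit{Peri}(P_i)\geq f(\textit{Area}(P_i))\) for every \(i\), with equality if and only if \(P_i\) is regular. It therefore suffices to prove \(\sum_{i=1}^k f(A_i)\geq k\,f(T/k)\) whenever \(\sum_i A_i=T\) and each \(A_i\) lies in the admissible range, and this will follow from Jensen's inequality once \(f\) is shown to be convex on that range.

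Second, I would make \(f\) explicit. Splitting a regular \(n\)-gon into \(2n\) congruent right triangles by joining the center to the vertices and to the edge-midpoints, each right triangle has angle \(\pi/n\) at the center, half the interior angle \(\alpha/2\) at a vertex, and the half-side \(s/2\) opposite the central angle. The right-triangle relation \(\cos(\pi/n)=\cosh(s/2)\sin(\alpha/2)\) yields \(s=2\cosh^{-1}\left(\cos(\pi/n)/\sin(\alpha/2)\right)\). By Gauss--Bonnet the area is \(A=(n-2)\pi-n\alpha\), so \(\alpha/2=((n-2)\pi-A)/(2n)\), and multiplying the side length by \(n\) gives
\[
f(A)=2n\cosh^{-1}\left(\frac{\cos(\pi/n)}{\sin\left(\frac{(n-2)\pi-A}{2n}\right)}\right),
\]
which is exactly the right-hand side of the claimed bound when \(A=T/k\).

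Third, and this is the crux, I would establish convexity of \(f\). Substituting \(u=((n-2)\pi-A)/(2n)\), an affine (orientation-reversing) change of variable that preserves convexity, it suffices to show that \(h(u)=\cosh^{-1}(c/\sin u)\) with \(c=\cos(\pi/n)\) is convex. A direct differentiation gives
\[
h''(u)=\frac{c\left(c^2-\sin^2 u\,(1+\cos^2 u)\right)}{\sin^2 u\,(c^2-\sin^2 u)^{3/2}},
\]
whose denominator is positive on the domain \(\sin u<c\). Setting \(x=\sin^2 u\), the sign of \(h''\) is that of \(x^2-2x+c^2\), which is positive precisely when \(x<1-\sqrt{1-c^2}=1-\sin(\pi/n)\). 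Translating back through the substitution, this is exactly the hypothesis \(\textit{Area}(P_i)>(n-2)\pi-2n\sin^{-1}\left(\sqrt{1-\sin(\pi/n)}\right)\), so the area restriction is what guarantees strict convexity of \(f\). Since this region is an interval, the mean \(T/k\) of admissible areas again lies in it. I expect this second-derivative computation, and the verification that the admissible areas fall inside the convex region, to be the main technical obstacle.

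Finally, with \(f\) strictly convex I would apply Jensen's inequality,
\[
\frac{1}{k}\sum_{i=1}^k f(A_i)\geq f\left(\frac{1}{k}\sum_{i=1}^k A_i\right)=f(T/k),
\]
so that \(\sum_i \textit{Peri}(P_i)\geq \sum_i f(A_i)\geq k\,f(T/k)\), which is the desired inequality. For the equality case, the first inequality forces each \(P_i\) to be regular, while strict convexity forces all \(A_i=T/k\) in the Jensen step; hence equality holds if and only if every \(P_i\) is isometric to a regular \(n\)-gon of area \(T/k\).
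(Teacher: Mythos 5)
Your proposal is correct and follows essentially the same route as the paper: reduce to regular polygons via Bezdek's inequality, express the regular $n$-gon's perimeter through the right-triangle relation $\cos(\pi/n)=\cosh(s/2)\sin(\alpha/2)$, verify convexity of the resulting one-variable function (your quadratic $x^2-2x+c^2$ with $x=\sin^2 u$ is exactly the bracket appearing in the paper's computation of $f''$), and finish with a Jensen-type argument (the paper's Lemma \ref{convex lemma}). The only cosmetic difference is that you parameterize by area while the paper uses the interior angle $\theta_i$ with the constraint $\sum_i\theta_i=((n-2)k\pi-T)/n$; these are related by an affine substitution, so the computations coincide, and your explicit check that $T/k$ lies in the convexity interval is a small point the paper leaves implicit.
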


\begin{thm}\label{minimize total Area main theorem}
Let \(P_1,\dots, P_k\) be hyperbolic \(n\)-gons with a fixed total perimeter 
\(\sum_{i=1}^k Peri(P_i) = T\) satisfying $Peri(P_i)>2n\cosh^{-1}\sqrt{1+\sin(\pi/n)}$ for $i=1,\dots, k$. Then, we have
\[\sum_{i=1}^k \textit{Area}(P_i) \leq k(n-2)\pi - 2nk \sin^{-1}\left(\frac{\cos(\pi/n)}{\cos(T/(2nk))}\right),\]  
with equality if and only if all \(P_i\) isometric to a regular polygon of perimeter $T/k$.

\end{thm}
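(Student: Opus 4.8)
The plan is to reduce the statement to a one–variable concavity fact about \emph{regular} $n$-gons and then apply Jensen's inequality, exactly parallel to the strategy behind Theorems~\ref{ThmTCMinPerimeter}--\ref{TIMinTarea}. The starting point is the area–perimeter relation for a regular hyperbolic $n$-gon of perimeter $p$. Decomposing such a polygon (side length $s=p/n$) into $2n$ congruent right triangles, each having apex angle $\pi/n$ at the centre, a right angle at a side's midpoint, and angle $\beta$ equal to half the interior angle at a vertex, the right-triangle identity $\cos(\pi/n)=\cosh(s/2)\sin\beta$ yields $\sin\beta=\cos(\pi/n)/\cosh(p/(2n))$. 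Since each interior angle equals $2\beta$, Gauss--Bonnet gives the area as $(n-2)\pi-2n\beta$, so the regular $n$-gon of perimeter $p$ has area
\[
f(p):=(n-2)\pi-2n\sin^{-1}\!\left(\frac{\cos(\pi/n)}{\cosh(p/(2n))}\right).
\]

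Next I would reduce from arbitrary polygons to regular ones. By the hyperbolic discrete isoperimetric inequality of Bezdek~\cite{BK}, taken in its equivalent ``fixed perimeter'' form (among all $n$-gons of a given perimeter the regular one has the largest area, which follows from the stated version together with monotonicity of $f$), each summand satisfies $\textit{Area}(P_i)\le f(\textit{Peri}(P_i))$, with equality if and only if $P_i$ is regular. Writing $p_i=\textit{Peri}(P_i)$, it therefore suffices to prove $\sum_i f(p_i)\le k\,f(T/k)$ subject to $\sum_i p_i=T$.

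The main step, and the principal obstacle, is to show that $f$ is strictly concave precisely on the range singled out by the hypothesis. Setting $u=p/(2n)$, $x=\cosh u$ and $c=\cos(\pi/n)$, a direct differentiation gives $f'(p)=c\,\sinh u/\bigl(\cosh u\sqrt{\cosh^2u-c^2}\bigr)$, and logarithmic differentiation of this expression shows that the sign of $f''(p)$ equals the sign of $-(x^4-2x^2+c^2)=-(x^2-(1+\sin(\pi/n)))(x^2-(1-\sin(\pi/n)))$. Because $x^2=\cosh^2u\ge 1>1-\sin(\pi/n)$, the second factor is always positive, so $f''<0$ holds exactly when $\cosh^2u>1+\sin(\pi/n)$, that is, when $p>2n\cosh^{-1}\sqrt{1+\sin(\pi/n)}$. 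This is precisely the standing hypothesis imposed on every $\textit{Peri}(P_i)$, and since the average $T/k$ of numbers each exceeding this threshold again exceeds it, $f$ is concave on an interval containing all the $p_i$ together with $T/k$.

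Finally, Jensen's inequality applied to the concave $f$ gives $\sum_i f(p_i)\le k\,f(T/k)$; combined with the isoperimetric reduction this produces the claimed upper bound $k(n-2)\pi-2nk\sin^{-1}\!\left(\cos(\pi/n)/\cosh(T/(2nk))\right)$. Equality requires equality in both steps: each $P_i$ regular (from the isoperimetric inequality) and all $p_i$ equal to $T/k$ (from strict concavity of $f$), which means exactly that every $P_i$ is isometric to the regular $n$-gon of perimeter $T/k$.
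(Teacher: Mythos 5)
Your proposal is correct and follows essentially the same route as the paper's proof: reduce to regular polygons via Bezdek's inequality \cite{BK}, express the area of a regular $n$-gon as a function of its perimeter, establish concavity (equivalently, convexity of the angle function) exactly on the range $p > 2n\cosh^{-1}\sqrt{1+\sin(\pi/n)}$ given by the hypothesis, and conclude with the equal-division/Jensen argument of Lemma \ref{convex lemma} and Corollary \ref{Concave corolary}, including the same equality analysis. You additionally supply two details the paper leaves implicit: the explicit logarithmic-differentiation computation showing the stated threshold is precisely where the sign of $f''$ changes, and the correct form $\cosh(T/(2nk))$ of the final bound, where the paper's statement and proof misprint $\cos(T/(2nk))$.
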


The motivation to prove Theorem~\ref{Total perimeter minimize} was to find the minimum length of uniform filling systems \cite{Uniformfilling}. The main idea is to convert each problem into an optimization problem with an objective function with a constraint. In order to solve the optimization problem, we make use of Lemma \ref{convex lemma} and hyperbolic trigonometry. In these kind of problems, showing the existence of optima is a difficult task. A unique feature of the use of Lemma \ref{convex lemma} is that it guarantees the existence and uniqueness of optima whenever it is applicable.
\section{Preliminaries}
In this section, we present hyperbolic trigonometry formulas for a right-angled hyperbolic triangle and include expressions for area and perimeter, which are essential for proving our main result in Section \ref{Proof of main results}.

\begin{lemma}\label{sine and cosine rules} \cite{PB}
Let \( ABC \) be a hyperbolic triangle with side lengths \( a, b, c \), where the side of length \( a \) is opposite to angle \( A \) and there is a right angle at \( A \) (see Figure \ref{RightAngledTriangle}). Then, the following relations hold:
\begin{figure}[H]
    \centering
    \includegraphics[width=0.25\linewidth]{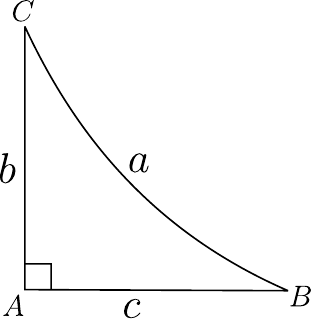}
    \caption{}
    \label{RightAngledTriangle}
\end{figure}
\vspace{2mm}
\noindent%
\begin{minipage}{0.7\textwidth}
\begin{enumerate}
\item[(i)]  \(\cosh a=\cosh b \cosh c \),  
\item[(ii)]  \(\cosh a= \cot B \cot C\),
\item[(iii)]  \(\sinh b= \sin B \sinh a\),  
\item[(iv)]  \(\sinh c=\cot B \tanh b\),  
\item[(v)]  \(\cos C =\cosh c \sin B\),  
\item[(vi)]  \(\cos B =\tanh c \coth a\).  
\end{enumerate}
\end{minipage}%
\begin{minipage}{0.25\textwidth}
\hfill%

\end{minipage}
\end{lemma}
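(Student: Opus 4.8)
The plan is to deduce all six identities from the two general laws of cosines and the law of sines for an arbitrary hyperbolic triangle, specialized to the right angle at \(A\). Recall that for a hyperbolic triangle with sides \(a,b,c\) opposite the angles \(A,B,C\) one has the first law of cosines \(\cosh a = \cosh b\cosh c - \sinh b\sinh c\cos A\), its dual \(\cos A = -\cos B\cos C + \sin B\sin C\cosh a\) (together with their cyclic permutations), and the law of sines \(\sinh a/\sin A = \sinh b/\sin B = \sinh c/\sin C\). These are the hyperbolic analogues of the Euclidean right-triangle relations, and the entire content of the lemma is simply what they become once we set \(A=\pi/2\), so that \(\cos A=0\) and \(\sin A=1\) (see Figure~\ref{RightAngledTriangle}).

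Three of the identities follow by direct substitution. Putting \(\cos A = 0\) in the first law of cosines gives \(\cosh a = \cosh b\cosh c\), which is (i). Putting \(\cos A = 0\) in the dual law of cosines gives \(\cos B\cos C = \sin B\sin C\cosh a\), that is \(\cosh a = \cot B\cot C\), which is (ii). The law of sines with \(\sin A = 1\) gives \(\sinh a = \sinh b/\sin B\), that is \(\sinh b = \sin B\sinh a\), which is (iii). Finally, the cyclic form of the dual law of cosines at the angle \(C\), namely \(\cos C = -\cos A\cos B + \sin A\sin B\cosh c\), collapses with \(A=\pi/2\) to \(\cos C = \sin B\cosh c\), which is (v).

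The two mixed relations (iv) and (vi) are then obtained by eliminating variables among (i), (iii) and (v); this is the only step requiring a little care, since it couples the side and angle quantities and relies on the symmetry \(b\leftrightarrow c\), \(B\leftrightarrow C\) (so that, for instance, \(\sinh c = \sin C\sinh a\) and \(\cos B = \sin C\cosh b\) hold alongside (iii) and (v)). For (vi) I would expand \(\tanh c\coth a = \tfrac{\sinh c}{\cosh c}\cdot\tfrac{\cosh a}{\sinh a}\), substitute \(\cosh a = \cosh b\cosh c\) from (i) and \(\sinh c = \sin C\sinh a\), and simplify to \(\sin C\cosh b = \cos B\). For (iv) I would expand \(\cot B\tanh b = \tfrac{\cos B}{\sin B}\cdot\tfrac{\sinh b}{\cosh b}\) and substitute \(\cos B = \sin C\cosh b\) together with \(\sinh b = \sin B\sinh a\) from (iii), which reduces to \(\sin C\sinh a = \sinh c\). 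The main obstacle, such as it is, lies entirely in fixing the sign and cyclic conventions in the dual law of cosines correctly; once these are pinned down the computation is routine. Alternatively, one could give a self-contained derivation by placing the right-angle vertex \(A\) at the origin of the Poincaré disk (or at the basepoint of the hyperboloid model) and computing the relevant distances and angles directly, but the route through the general laws is shorter and is the one I would record.
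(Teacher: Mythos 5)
Your proposal is correct, but note that the paper itself gives no proof of this lemma at all: it is stated with a citation to \cite{PB} (a standard reference for hyperbolic trigonometry), so there is no argument in the paper to compare against step by step. Your derivation is a legitimate self-contained substitute. Specializing the first law of cosines \(\cosh a = \cosh b\cosh c - \sinh b\sinh c\cos A\), the dual law \(\cos C = -\cos A\cos B + \sin A\sin B\cosh c\) (and its cyclic permutations), and the law of sines to \(A = \pi/2\) immediately yields (i), (ii), (iii) and (v), exactly as you say. Your eliminations for (iv) and (vi) also check out: \(\tanh c\coth a = \frac{\sin C \sinh a}{\cosh c}\cdot\frac{\cosh b\cosh c}{\sinh a} = \sin C\cosh b = \cos B\) using (i), the law of sines at \(c\), and the dual law at \(B\); and \(\cot B\tanh b = \frac{\sin C\cosh b}{\sin B}\cdot\frac{\sin B\sinh a}{\cosh b} = \sin C\sinh a = \sinh c\) similarly. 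The only caveat is the one you flag yourself: the argument rests on the general laws (in particular the sign convention in the dual law of cosines), which are exactly what the cited reference establishes; so in effect you have pushed the citation one layer deeper rather than eliminated it. That is entirely appropriate for a lemma of this kind, and what your route buys is transparency: the reader sees that all six right-angle relations are formal consequences of three general identities, rather than having to look up six separate formulas.
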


\begin{prop}
    Let $P$ be a regular hyperbolic $n$-gon with interior angle $\theta$. The perimeter of $P$ is given by
    \(
    \text{Peri}(P) = 2n \cosh^{-1}\left( \frac{\cos(\pi/n)}{\sin\left( \theta/2 \right)} \right).
    \)
\end{prop}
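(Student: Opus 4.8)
The plan is to decompose the regular $n$-gon into congruent right triangles and then read off the side length from the hyperbolic right-triangle relations in Lemma~\ref{sine and cosine rules}. First I would join the center $O$ of $P$ to each of its $n$ vertices. Since $P$ is regular, the center lies in the interior and these segments cut $P$ into $n$ congruent isosceles triangles; the angle at $O$ in each is $2\pi/n$, and the two base angles are each $\theta/2$, because the segment from $O$ to a vertex bisects the interior angle $\theta$ by symmetry. (As a sanity check, the angle sum $2\pi/n + \theta$ is strictly less than $\pi$, as required in the hyperbolic plane.) Dropping the perpendicular from $O$ to the midpoint $M$ of each side then halves every isosceles triangle into two congruent right triangles, producing $2n$ congruent right triangles in total.

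Next I would concentrate on one such right triangle, with the right angle at the foot $M$ of the perpendicular, the angle $\pi/n$ at the center $O$, and the angle $\theta/2$ at a vertex $V$. Matching this to the notation of Lemma~\ref{sine and cosine rules}, I take the right angle at $A = M$, the angle $B = \theta/2$ at the vertex, and the angle $C = \pi/n$ at the center; here the segment $OM$ is the inradius (apothem), the segment $OV$ is the circumradius, and the side $c$ opposite the central angle $C$ is exactly $MV$, i.e.\ half a side of $P$. Applying relation (v), $\cos C = \cosh c\,\sin B$, I obtain $\cosh c = \cos(\pi/n)/\sin(\theta/2)$, so that half the side length of $P$ equals $\cosh^{-1}\!\left(\cos(\pi/n)/\sin(\theta/2)\right)$.

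Finally, since $P$ has $n$ sides, each of length $2c$, its perimeter is $2nc = 2n\cosh^{-1}\!\left(\cos(\pi/n)/\sin(\theta/2)\right)$, which is the claimed formula. I do not expect a genuine obstacle in this argument: the only points that require care are verifying that the center lies in the interior so that the radial decomposition is valid, and correctly matching the triangle's labels to the hypotheses of Lemma~\ref{sine and cosine rules} — in particular, checking that the side opposite the central angle $C$ is the half-side $MV$ rather than the apothem $OM$ or the circumradius $OV$, so that the correct relation (v) is applied.
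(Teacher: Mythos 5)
Your proposal is correct and follows essentially the same route as the paper: both decompose the regular $n$-gon via the circumcenter into right triangles with angles $\pi/n$ at the center and $\theta/2$ at a vertex, and apply the right-triangle relation $\cos C = \cosh c \, \sin B$ from Lemma~\ref{sine and cosine rules} to the half-side. Your write-up is in fact more careful than the paper's, which leaves the label-matching implicit and has a minor notational slip between the half-side and $\ell/2$.
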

\begin{proof}

\begin{figure}[htbp]
    \centering
    \includegraphics[width=0.3\linewidth]{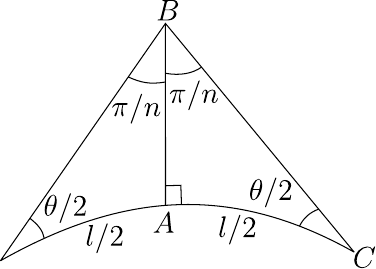}
    \caption{A triangular section of regular hyperbolic $n$-gon.}
    \label{Perimeter}
\end{figure}

Let \( B \) be the circumcenter of the polygon \( P \) and the length of each side of \( P \) is \( 2\ell \). The perpendicular projection of \( B \) onto any side bisects both the side and the angle at \( B \) (see Figure \ref{Perimeter}).

 By Lemma \ref{sine and cosine rules}, we have
   \(
   \frac{\ell}{2} = \cosh^{-1}\left( \frac{\cos(\pi/n)}{\sin(\theta/2)} \right).
   \)
   Therefore, the perimeter of $P$ is
   \(
   \text{Peri}(P) = n\ell = 2n \cosh^{-1}\left( \frac{\cos(\pi/n)}{\sin (\theta/2)} \right).
   \)
\end{proof}
\begin{thm}[Gauss- Bonnet]
The area of a hyperbolic \( n \)-gon \( P \) with interior angles \( \theta_1, \dots, \theta_n \) is given by the formula:  
\(
\text{Area}(P) = (n - 2)\pi - (\theta_1 + \dots + \theta_n)
\).
\end{thm}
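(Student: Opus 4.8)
The plan is to reduce the $n$-gon formula to the triangle case and then assemble the general result by triangulation. First I would establish the base case: for a hyperbolic triangle $T$ with interior angles $\alpha,\beta,\gamma$, one has $\text{Area}(T)=\pi-(\alpha+\beta+\gamma)$. Granting this, the general formula follows by decomposing $P$ into $n-2$ triangles and adding up their areas, so the bulk of the genuine work lies in the base case together with careful bookkeeping of the angles during triangulation.

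For the triangulation step, fix a vertex $V_1$ of $P$ and draw the diagonals from $V_1$ to each non-adjacent vertex $V_3,\dots,V_{n-1}$. Since $P$ is convex (as are all our cyclic and tangential polygons), these diagonals lie inside $P$ and partition it into the $n-2$ triangles $T_j=V_1V_{j+1}V_{j+2}$ for $j=1,\dots,n-2$. By additivity of hyperbolic area, $\text{Area}(P)=\sum_{j=1}^{n-2}\text{Area}(T_j)$. Applying the base case to each $T_j$ gives
\[
\text{Area}(P)=\sum_{j=1}^{n-2}\bigl(\pi-\sigma_j\bigr)=(n-2)\pi-\sum_{j=1}^{n-2}\sigma_j,
\]
where $\sigma_j$ denotes the sum of the three interior angles of $T_j$. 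The key observation is that every angle appearing in some $\sigma_j$ is a piece of exactly one interior angle of $P$: the angles of the triangles meeting at $V_1$ reassemble into $\theta_1$, the angles meeting at each $V_i$ with $3\le i\le n-1$ reassemble into $\theta_i$, and the single triangle angles at $V_2$ and $V_n$ equal $\theta_2$ and $\theta_n$ respectively. Hence $\sum_{j}\sigma_j=\theta_1+\dots+\theta_n$, which yields the claimed formula.

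It remains to prove the triangle base case, which I regard as the main obstacle, since this is where the hyperbolic geometry (specifically curvature $-1$) genuinely enters. I would work in the upper half-plane model with area element $dA=y^{-2}\,dx\,dy$ and first treat a triangle with a single ideal vertex: normalizing so that the two geodesic sides through the ideal point are vertical rays, a direct integration shows that such a triangle has area $\pi-\alpha-\beta$, where $\alpha,\beta$ are its two finite angles. A general geodesic triangle is then obtained by adjoining or removing such singly-ideal pieces, and tracking the angles through these operations produces $\text{Area}(T)=\pi-(\alpha+\beta+\gamma)$. Alternatively, one may invoke the smooth Gauss--Bonnet theorem: for a geodesic triangle the geodesic curvature of the sides vanishes and $K\equiv-1$, so the identity $\int_T K\,dA+\sum_i(\pi-\theta_i)=2\pi$ collapses immediately to the same formula. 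Either route completes the proof.
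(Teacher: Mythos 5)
The paper itself offers no argument for this statement: its ``proof'' is simply the citation ``See \cite{AFB}'', so there is no internal proof to compare yours against. Your proposal is the standard textbook argument---almost certainly the one in the kind of reference the authors are pointing to: establish the triangle case $\mathrm{Area}(T)=\pi-(\alpha+\beta+\gamma)$ by an upper half-plane computation for triangles with one ideal vertex (or, alternatively, by invoking smooth Gauss--Bonnet with $K\equiv-1$ and geodesic sides), then triangulate the polygon by a fan of diagonals from one vertex and check that the triangle angles reassemble into $\theta_1+\dots+\theta_n$. The outline is correct, and both routes you give for the base case are sound. One caveat worth flagging: the fan triangulation from a single vertex $V_1$ is only guaranteed to stay inside $P$ when $P$ is convex, or at least star-shaped with respect to $V_1$. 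The theorem as stated concerns an arbitrary hyperbolic $n$-gon, so in full generality you would need the standard (but not entirely trivial) combinatorial fact that every simple polygon admits a triangulation by interior diagonals into $n-2$ triangles; once that lemma is granted, your angle bookkeeping goes through verbatim. In the context of this paper the caveat is harmless, since the formula is only ever applied to centered cyclic and tangential polygons, which are convex, exactly as you assumed.
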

\begin{proof}
    See \cite{AFB}
\end{proof}

\section{A convexity lemma}
The following lemma is repeatedly used throughout this article to prove various optimization results.

\begin{lemma}\label{convex lemma}
    Let \( f \) be a convex and twice differentiable function defined on an open interval \( I \). Define the function \( F: I^k \to \mathbb{R} \) as
    \(
    F(x_1, x_2, \dots, x_k) = f(x_1) + f(x_2) + \dots + f(x_k),
    \)
    where \( x_1, x_2, \dots, x_k \in I \). Consider the following optimization problem:
    \[
    \text{Minimize} \quad F(x_1, x_2, \dots, x_k)
    \quad \text{subject to the constraint} \quad
    x_1 + x_2 + \dots + x_k = c,
    \]
    where \( c \in \mathbb{R} \) is a constant such that \(\frac{c}{k}\in I\). The global minimum is attained at the point
    \(
    \left( \frac{c}{k}, \dots, \frac{c}{k} \right).
    \)
\end{lemma}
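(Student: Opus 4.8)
The plan is to prove this via the method of Lagrange multipliers, combined with convexity to upgrade the resulting critical point from a mere stationary point to the \emph{global} minimum. The constraint set $\{(x_1,\dots,x_k)\in I^k : x_1+\dots+x_k = c\}$ is a convex subset of $I^k$ (being the intersection of the convex set $I^k$ with an affine hyperplane), and since $f$ is convex, the objective $F$ is convex as a sum of convex functions in separate variables. For a convex function on a convex domain, any local minimum is automatically a global minimum and any critical point of the Lagrangian is a genuine minimizer, so it suffices to locate a stationary point.

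First I would form the Lagrangian $L(x_1,\dots,x_k,\lambda) = \sum_{i=1}^k f(x_i) - \lambda\left(\sum_{i=1}^k x_i - c\right)$ and compute its partial derivatives, obtaining the stationarity conditions $f'(x_i) = \lambda$ for each $i = 1,\dots,k$. The key observation is that all $k$ coordinates must satisfy $f'(x_i) = \lambda$ for the \emph{same} value $\lambda$, so that $f'(x_1) = f'(x_2) = \dots = f'(x_k)$. At this point I would invoke the symmetric candidate $x_1 = x_2 = \dots = x_k = c/k$, which clearly satisfies the constraint (their sum is $c$) and trivially satisfies $f'(x_i)=f'(c/k)$ for all $i$, hence is a critical point with $\lambda = f'(c/k)$.

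To conclude that this critical point is the global minimum, I would appeal to convexity directly rather than to second-order tests. Since $F$ is a convex function on the convex constraint set, the first-order condition is both necessary and sufficient for a global minimum; equivalently, one can argue from Jensen's inequality applied to $f$, giving
\[
\frac{1}{k}\sum_{i=1}^k f(x_i) \geq f\!\left(\frac{1}{k}\sum_{i=1}^k x_i\right) = f\!\left(\frac{c}{k}\right),
\]
so that $F(x_1,\dots,x_k) \geq k\,f(c/k) = F(c/k,\dots,c/k)$ for every feasible point. This Jensen-based route is in fact the cleanest and sidesteps Lagrange multipliers entirely, so I would likely present it as the main argument and mention the Lagrangian only as motivation for why the symmetric point is the natural candidate.

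The main obstacle is not the algebra, which is routine, but rather being careful about the hypotheses needed for the conclusion. The statement asserts a \emph{global} minimum, and the convexity of $f$ (as opposed to strict convexity) guarantees existence of the minimum at the symmetric point but not uniqueness; if one wanted to claim the minimizer is unique, strict convexity would be required, which is not assumed here. I would therefore state the conclusion exactly as given (the global minimum is \emph{attained} at $(c/k,\dots,c/k)$) without overclaiming uniqueness, and note that the twice-differentiability hypothesis is stronger than strictly necessary for the Jensen argument but is natural given how the lemma is applied in the sequel, where $f'$ and $f''$ are used to verify convexity of the specific trigonometric objective functions.
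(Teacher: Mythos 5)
Your proof is correct, but it takes a genuinely different route from the paper. The paper's proof writes each feasible point as $a_i = \frac{c}{k} + h_i$, applies Taylor's theorem with the Lagrange form of the remainder to get $f\left(\frac{c}{k}+h_i\right) = f\left(\frac{c}{k}\right) + h_i f'\left(\frac{c}{k}\right) + \frac{h_i^2}{2} f''(c_i)$, discards the non-negative quadratic terms using $f'' \geq 0$, and then observes that the constraint forces $\sum_i h_i = 0$, so the linear terms cancel and $F(a_1,\dots,a_k) \geq k f\left(\frac{c}{k}\right)$. Your Jensen's-inequality argument reaches the same conclusion in one line and is strictly more economical: it needs only convexity, not twice-differentiability, so in effect you have proved a stronger lemma (the paper's Taylor expansion is essentially a hands-on proof of the equal-weight finite Jensen inequality, made possible by the extra smoothness hypothesis). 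What the paper's route buys is that it uses exactly the hypothesis ($f''\geq 0$) that is verified in every application of the lemma later in the paper, keeping the whole development self-contained at the level of calculus; what your route buys is brevity and generality. Your Lagrange-multiplier discussion is harmless as motivation, and your caveat about uniqueness is well taken: the lemma as stated only asserts attainment at the symmetric point, and indeed the paper's equality cases in its theorems implicitly rely on strict convexity of the specific functions involved, not on the lemma itself.
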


\begin{proof}
Since \( f \) is convex, we have \( f''(x) \geq 0 \) for all \( x \in I \).

Let $(a_1,\dots,a_k)\in I^k$ and let $a_i=\frac{c}{k}+h_i$ for some $h_i$ for $1\leq i\leq k$.
Using Taylor's theorem, for each \( i \), there exists \( c_i \) such that
\[
F\left(a_1,\dots, a_k\right)=F\left (\frac{c}{k}+ h_1, \dots,\frac{c}{k} + h_k\right) = \sum_{i=1}^{k} f\left(\frac{c}{k} + h_i\right) = \sum_{i=1}^{k} \left[ f\left(\frac{c}{k}\right) + h_i f'\left(\frac{c}{k}\right) + \frac{h_i^2 f''(c_i)}{2} \right].
\]
Since \( f''(x) \geq 0 \), the quadratic term is non-negative, so
\[
F\left(\frac{c}{k} + h_1, \dots, \frac{c}{k} + h_k\right) \geq \sum_{i=1}^{k} \left[ f\left(\frac{c}{k}\right) + h_i f'\left(\frac{c}{k}\right) \right].
\]

Given the constraint \( a_1 + \dots + a_k = c \), we have
\[
\sum_{i=1}^{k} \left(\frac{c}{k} + h_i\right) = c.
\]
This simplifies to
\[
c + \sum_{i=1}^{k} h_i = c \implies \sum_{i=1}^{k} h_i = 0.
\]
Thus,
\[
F\left(a_1,\dots, a_k\right) \geq \sum_{i=1}^{k} f\left(\frac{c}{k}\right) = F\left(\frac{c}{k}, \dots, \frac{c}{k}\right).
\]
Therefore, \( F \) attains a global minimum at \( \left(\frac{c}{k}, \dots, \frac{c}{k}\right) \).

\end{proof}

\begin{cor}\label{Concave corolary}
    If $f$ is concave, then $F$ has global maximum at \(\left(\frac{c}{k}, \dots, \frac{c}{k}\right) \). 
\end{cor}
\section{Proof of main results}\label{Proof of main results}

In this section, we prove Theorem \ref{ThmIminPerimeter} -- \ref{minimize total Area main theorem}. 
We describe the Figure \ref{Figure 1 & 2}, which we use repeatedly in our proofs. Consider an \( n \)-sided tangential polygon \( P \) with inradius \( c \) (see Figure \ref{Figure-1}). Let \( \theta \) be the angle at the incenter \( B \), formed by the radii drawn to two consecutive points of tangency on the incircle. Let \( b \) be the length of the tangent segments from these points to the vertices where adjacent tangents meet. Let $\phi$ be the interior angle of $P$ at the vertex.  Note that the line segment from the incenter to a vertex, where the two tangents intersect, bisects the angle \( \theta \) and \(\phi\).

Now, consider an \( n \)-sided cyclic hyperbolic polygon with circumradius \( a \) (see Figure \ref{Figure-2}). Let \( \theta \) be the angle at the circumcenter \( B \), formed by the radii drawn to the endpoints of a side of length \( 2c \). The line from \( B \) represents the perpendicular projection onto the corresponding side of the polygon, bisecting both the angle \( \theta \) and the side. Let $\phi$ be the angle between the side of the polygon and the inradius.

\begin{figure}[H]
    \centering
    \subfigure[Tangential polygon]{
        \includegraphics[width=0.4\textwidth]{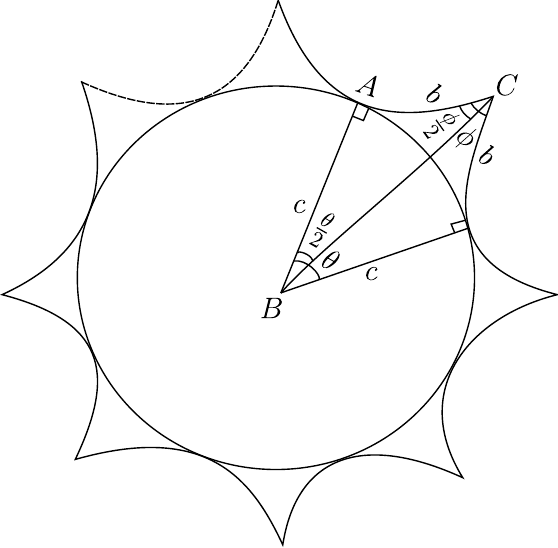}
        \label{Figure-1}
    }
    \hfill
    \subfigure[Cyclic polygon]{
        \includegraphics[width=0.35\textwidth]{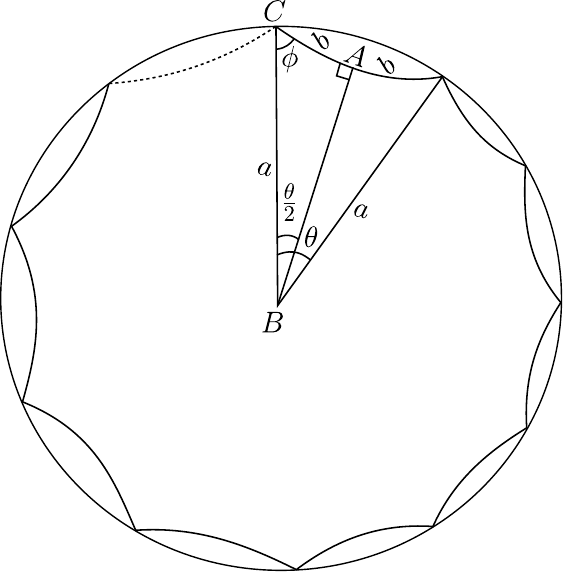}
        \label{Figure-2}
    }
    \caption{Tangential and cyclic polygons}
    \label{Figure 1 & 2}
\end{figure}

\subsection{Isoperimetric type inequalities for a cyclic or tangential polygon }
In this subsection, we prove Theorem \ref{ThmIminPerimeter}-- \ref{ThmCMinArea}

\begin{proof}[Proof of Theorem \ref{ThmIminPerimeter}]  

Let \( P \) be the polygon as described in Figure~\ref{Figure-1}, with \( \theta = \theta_i \) and \( c = r \).

By Lemma \ref{sine and cosine rules}, we have  
\[
\tan (\theta_i/2) = \frac{\tanh b}{\sinh r}.
\]  
\[\implies
b(\theta_i) = \tanh^{-1}(\sinh r \tan (\theta_i/2)).
\]
Differentiating,  
\[
b'(\theta_i) = \frac{\sinh r}{2} \cdot \frac{\sec^2(\theta_i/2)}{1 - \sinh^2 r \tan^2(\theta_i/2)}.
\]
  
\[\implies
b''(\theta_i) = \frac{\sinh r}{2} \cdot \frac{\sec^2(\theta_i/2) \tan(\theta_i/2) (1 + \sinh^2 r)}{(1 - \sinh^2 r \tan^2(\theta_i/2))^2} > 0, \quad \text{for } 0<\theta_i<\pi.
\]
Thus, \( b \) is a convex function of \( \theta_i \). By Lemma \ref{convex lemma}, minimizing the perimeter  
\[
\sum_{i=1}^{n} 2nb(\theta_i)
\]
under the constraint  
\[
\sum_{i=1}^{n} \theta_i = 2\pi
\]
is achieved when all angles \( \theta_i \) are equal, that is, \( \theta_i = 2\pi/n \). This implies all the sides and angles of the polygon are equal, thus it's regular.
Thus,
\[
\operatorname{Peri}(P) \geq 2n \tanh^{-1} \left( \tan(\pi/n) \sinh r \right).
\]

\end{proof}

\begin{proof}[Proof of Theorem \ref{thmIMinPerimeter}]
     
Let \( P \) be the polygon as described in Figure~\ref{Figure-2}, with \( \theta = \theta_i \) and \( a =R  \).

    By Lemma \ref{sine and cosine rules}, we have:
    \[
    \sin(\theta_i/2) = \frac{\sinh b}{\sinh R}.
    \]
   
    \[
    \implies
   b(\theta_i) = \sinh^{-1}\left(\sin(\theta_i/2) \sinh R\right).
    \]
 \[\implies
 b'(\theta)=\frac{\sinh R}{2}\frac{\cos(\theta_i/2)}{\sqrt{1+\sinh^{2}R}\sin^2(\theta_i/2)}
 \]
\[
 \implies b''(\theta_i)=-\frac{\sinh R}{4} \frac{\sin(\theta_i/2)((1+\sinh^2R \sin^2(\theta_i/2))^{\frac{3}{2}})+\cos^2(\theta/2)\sin(\theta_i/2)}{(1+\sinh^2R \sin^2(\theta_i/2))^{\frac{3}{2}}}< 0 \textit{ for all } 0<\theta_i<\pi
\]

   The side length \( b \) is a concave function of \( \theta_i \). 

    Therefore, by Corollary \ref{Concave corolary} the solution to the problem:
    \[
    \text{Maximize } \sum_{i=1}^n 2 b(\theta_i),
    \]
    under the constraint \[ \sum_{i=1}^n \theta_i = 2\pi \] is achieved when all angles \( \theta_i \) are equal, that is, \( \theta_i = 2\pi/n \) for all $i=1,\dots,n$. This implies all the sides and angles of the polygon are equal, thus it's regular.
Thus, \( \textit{Peri}(P) \leq 2n \sinh^{-1} \left( \sin(\pi/n) \sinh R \right) \)

\end{proof}

\begin{proof} [Proof of Theorem \ref{ThmIMaxArea}]

Let \( P \) be the polygon as described in Figure~\ref{Figure-1}, with \( \theta = \theta_i \) and \( c =r  \).

By Lemma \ref{sine and cosine rules}, we have  
\[
\cosh r = \frac{\cos (\phi/2)}{\sin(\theta_i/2)}.
\]

\[\implies
\phi(\theta_i) =2 \cos^{-1} \left( \sin (\theta_i/2) \cosh r \right).
\]

Taking the derivative with respect to \( \theta_i \), we get:
\[
\frac{\phi'(\theta_i)}{\cosh r} = \frac{- \cos (\theta_i/2)}{\sqrt{1 - \cosh^2 r \sin^2 (\theta_i/2)}}.
\]

Taking the second derivative with respect to \( \theta_i \), we obtain:
\[
\frac{\phi_i''(\theta_i)}{\cosh r} = - \frac{\sin (\theta_i/2)(cosh^2r-1)}{\left( 1 - \cosh^2 r \sin^2 (\theta_i/2) \right)^{3/2}} < 0 \quad \text{for all} \quad 0 < \theta_i < \pi.
\]

Since \( \phi \) is a convex function of \( \theta_i \), by Corollary \ref{Concave corolary}, the problem of maximizing
\[
\sum_{i=1}^{n} \phi(\theta_i)
\]
subject to the constraint
\[
\sum_{i=1}^{n} \theta_i = 2\pi
\]
attains its maximum when all angles \( \theta_i \) are equal, that is, \( \theta_i = 2\pi/n \) for all $i=1,\dots,n$. This condition corresponds to minimizing the area since the area of $P$ is 
\[
\text{Area(P)} = (n-2)\pi - \sum_{i=1}^{n} \phi_i.
\]
Therefore, the area is maximized when the polygon is regular. 

Since all \( \theta_i \)'s are equal, that is, $\theta_i=2\pi/n$, it follows that all sides and angles of the polygon are equal, and hence the polygon is regular. 
Thus, \( \textit{Area}(P) \geq (n-2)\pi - 2n \cos^{-1}\left( \sin (\pi/n)\cosh r \right) \).

\end{proof}

\begin{proof}[Proof of Theorem \ref{ThmCMinArea}]
   Let \( P \) be the polygon as described in Figure~\ref{Figure-2}, with \( \theta = \theta_i \) and \( a =R  \).

By Lemma \ref{sine and cosine rules}, we have:
\[
\phi(\theta_i) = \cot^{-1}(\cosh R \tan(\theta_i/2))
\]

Taking the derivative with respect to \( \theta_i \), we get:
\[
\frac{-2\phi'(\theta_i)}{\cosh R} = \frac{\sec^2(\theta_i/2)}{1 + \cosh^2 R \tan^2(\theta_i/2)}
\]

Taking the second derivative with respect to \( \theta_i \), we obtain:
\[
\frac{-2\phi'(\theta_i)}{\cosh R} = \frac{\tan(\theta_i/2) (\cosh^2 R - 1) \tan^2(\theta_i/2)}{(1 + \cosh^2 R \tan^2(\theta_i/2))^2}
\]

Since \(\cosh^2 R - 1 > 0\), we conclude \(\phi''(\theta_i) < 0\).

    This shows that \(\phi\) is a concave function of $\theta_i$. By Corollary \ref{Concave corolary}, the optimization problem:

    \[
    \text{Maximize } \sum_{i=1}^n \phi_i
    \]

    under the constraint:

    \[
    \sum_{i=1}^n \theta_i = 2\pi
    \]

    attains its maximum when all angles \( \theta_i \) are equal, that is,  \( \theta_i = 2\pi/n \) for all $i=1,\dots,n$. This implies that the polygon is regular.

    This condition corresponds to minizing the area since the area is given by
\[
\text{Area} = (n-2)\pi - \sum_{i=1}^{n} \phi_i.
\]

Thus, \( \textit{Area}(P) \geq (n-2)\pi -2n \cot^{-1} \left(\cosh R \tan(\pi/n) \right) \).
\end{proof}

\subsection{Isoperimetric type inequalties for multiple tangential or cylic polygons}
In this subsection, we prove Theorem \ref{ThmTCMinPerimeter} -- \ref{TIMinTarea}

\begin{proof}[Proof of Theorem \ref{ThmTCMinPerimeter}]

Let \( P_i \) be the polygon as described in as described in Figure~\ref{Figure-2}, with \( \theta = 2\pi/n \) and \( a =R_i  \). Our goal is to solve the following constrained minimization problem:

\[
\text{Minimize} \quad \sum_{i=1}^{k} \text{Peri}(P_i),
\]
subject to the constraint
\[
\sum_{i=1}^{k} R_i= T,
\].

 By Lemma \ref{sine and cosine rules}, we have the relation:
\[
b(R_i) = \sinh^{-1} \left( \sin (\pi/n) \sinh R_i \right).
\]

\[\implies
b'(R_i) = \frac{\sin (\pi/n) \cosh R_i}{\sqrt{\sin^2 (\pi/n) \sinh^2 R_i + 1}}.
\]
\[\implies
b''(R_i) = \frac{\sin (\pi/n) \sinh R_i \cos^2 (\pi/n)}{(\sin^2 (\pi/n) \sinh^2+1)^\frac{3}{2}}>0.
\]

By Lemma \ref{convex lemma}, the total perimeter
\[
\sum_{i=1}^{k} \text{Peri}(P_i) = \sum_{i=1}^{k} n b(R_i)
\]
attains its minimum when all \( R_i \) are equal. i.e $R_i=T/k$. Consequently, all \( P_i \) are isometric to a regular polygon of circumradius $T/k$.
Thus, \(\sum_{i=1}^k \operatorname{Peri}(P_i) \geq 2nk \sinh^{-1}\left(\sin(\pi/n)\sinh(T/k)\right)\).

\end{proof}

\begin{proof}[Proof of Theorem \ref{ThmTIMinTperimeter}]

By Theorem \ref{ThmIminPerimeter}, without loss of generality, we can assume all $P_i$ are regular.
Let the polygon \( P \) as described in Figure~\ref{Figure-1}, with \( \theta = 2\pi/n \) and \( c =r_i  \).
By Lemma \ref{sine and cosine rules}, we have:
\[
b(r_i)=\tanh^{-1}(\tan(\pi/n)\sinh r_i)
\]
 One can see that $b''(\theta_i)>0$. Similar arguments work as done in the case of Theorem \ref{ThmTCMinPerimeter}.
\end{proof}

\begin{proof}[Proof of Theorem \ref{ThmTCMaxArea}]

By Theorem \ref{ThmCMinArea}, without loss of generality, we can assume all $P_i$ are regular.
Let \( P_i \) be the polygon as described in as described in Figure~\ref{Figure-2}, with \( \theta = 2\pi/n \) and \( a =R_i  \).

By Lemma \ref{sine and cosine rules}, we have:
\[
\phi(R_i)=\cot^{-1}\left({\tan(\pi/n)\cosh R_i}\right)
\]
One can see that $\phi''(R_i)<0$. Similar arguments work as done in the case of Theorem \ref{ThmTCMinPerimeter}.
\end{proof}

\begin{proof}[Proof of Theorem \ref{TIMinTarea}]
   
Let \( P_i \) be the polygon as described in Figure~\ref{Figure-1}, with \( \theta = 2\pi/n\) and \( c = r_i \).

By Lemma \ref{sine and cosine rules}, we have:

\[
\phi(r_i)=\cos^{-1}(\sin(\pi/n)\cosh r_i)
\]
    
One can see that $\phi''(r_i)<0$.
    Similar arguments work as done in the case of Theorem \ref{ThmTCMinPerimeter}.
\end{proof}

\subsection{Isoperimetric inequality for multiple polygons}

\begin{proof}[Proof of Theorem \ref{Total perimeter minimize}]

Without loss of generality, we assume all \( P_i \) are regular \cite{BK}.

Let \( \theta_i \) denotes the interior angle of \( P_i \). The perimeter of \( P_i \) is  
\[
\textit{Peri}(P_i) = 2n \cosh^{-1} \!\Biggl(\frac{\cos(\pi/n)}{\sin(\theta_i/2)}\Biggr).
\]
The total area is $T$, which implies  
\[
\sum_{i=1}^{k} \theta_i =\frac{(n-2)k\pi - T}{n}.
\]
We aim to minimize  
\[
\sum_{i=1}^{k} 2n \cosh^{-1}\!\Biggl(\frac{\cos(\pi/n)}{\sin(\theta_i/2)}\Biggr).
\]
Let $$f(\theta) = \cosh^{-1}\left(\frac{\cos(\pi/n)}{\sin(\theta/2)}\right)$$

Then we have,  
\[
f'(\theta) = -\frac{\cos(\pi/n)}{2} \cdot \frac{\cos(\theta/2)}{\sin(\theta/2)\sqrt{\cos^2(\pi/n) - \sin^2(\theta/2)}}.
\]

We obtain  
\[
f''(\theta) = \frac{\cos(\pi/n)}{4\Bigl[\cos^2(\pi/n) - \sin^2(\theta/2)\Bigr]^{3/2}}
\Biggl[\frac{\cos^2(\pi/n)}{\sin^2(\theta/2)} - 2 + \sin^2(\theta/2)\Biggr].
\]

It follows that  
\( f''(\theta_i) > 0 \) if \( \theta_i < 2\sin^{-1}\Bigl(\sqrt{1 - \sin(\pi/n)}\Bigr) \), which implies  
\(
\text{Area}(P_i) > (n-2)\pi - 2n \sin^{-1} \sqrt{1 - \sin(\pi/n)}.
\)
By applying Lemma \ref{convex lemma}, the problem is minimized when \( \theta_i \) are equal, that is,  
$\theta_i = 2 \pi/n$ $\text{for all } i=1,\dots,k$. Therefore,  \( P_i \) are isometric to a regular polygon of area $T/k$. Thus, \(
\sum_{i=1}^k \textit{Peri}(P_i) \geq 2nk \cosh^{-1} \left[\cos (\pi/n)/\sin\left(((n-2)\pi - T/k)/(2n)\right) \right].
\)
\end{proof}

\begin{proof}[Proof of Theorem \ref{minimize total Area main theorem} ]

Without loss of generality, we can assume all $P_i$ are regular \cite{BK}.
We aim to solve the following maximization problem:

\[
\max \sum_{i=1}^{k} \text{Area}(P_i)
\]

subject to the constraint

\[
\sum_{i=1}^{k} \text{Peri}(P_i) = T
\]

The area of a hyperbolic regular \( n \)-gon \( P_i \) is given by
\(
\text{Area}(P_i) = (n - 2) \pi - n \theta_i
\), where \( \theta_i \) is the interior angle.

The perimeter \( \text{Peri}(P_i) \) is given by  

\[
\text{Peri}(P_i) = 2n \cosh^{-1} \left( \frac{\cos (\pi/n)}{\sin (\theta_i/2)} \right),
\]
Let \( \text{Peri}(P_i)=x_i \)
\[\implies
\theta_i = 2 \sin^{-1} \left( \frac{\cos (\pi/n)}{\cosh (x_i/2n)} \right),
\].  

Thus, the original problem is reduced to minimizing  

\[
\sum_{i=1}^{k} 2 \sin^{-1} \left( \frac{\cos (\pi/n)}{\cosh (x_i/2n)} \right)
\]

subject to the constraint  

\[
\sum_{i=1}^{k} x_i = T.
\]
Let \(
f(x) = \sin^{-1} \left((\cos (\pi/n))/(\cosh (x/2n)) \right).
\)
 We have $f''(x)>0$ for  $x>2n\cosh^{-1}\sqrt{1+\sin(\pi/n)}$. 

By Lemma \ref{convex lemma}, the minimum attains when \( x_i \) are equal, that is, \( x_i = T/k\) for all $i=1,\dots,n$. Thus, $\sum_{i=1}^k \textit{Area}(P_i) \leq k(n-2)\pi - 2nk \sin^{-1}\left[(\cos(\pi/n))/(\cos(T/(2nk))\right]$.
\end{proof}

 \section{Concluding remarks}  

A question to explore is whether the statements of Theorems \ref{thmIMinPerimeter}, \ref{ThmCMinArea}, \ref{ThmTCMinPerimeter}, and \ref{ThmTCMaxArea} remain valid when \( P \) is not centered. Additionally, do the conclusions of Theorems \ref{ThmIminPerimeter}, \ref{ThmIMaxArea}, \ref{ThmTIMinTperimeter}, and \ref{TIMinTarea} still hold if \( P \) is not tangential?  

\section*{Acknowledgements} 
The First author would like to thank the Council of Scientific and Industrial Research (File Number: 09/1290(0005)2020-EMR-I ) for providing financial support. The second author gratefully acknowledges the financial support from the Science and Engineering Research Board (SERB), Government of India through MATRICS grant (File Number: MTR/2021/000067).

\bibliographystyle{plain}
\bibliography{Bibilography}

\end{document}